\documentclass[12pt, onecolumn]{article}

\usepackage[dvipdfmx]{graphicx}
\usepackage[utf8]{inputenc}
\usepackage[T1]{fontenc}
\usepackage[numbers]{natbib}
\usepackage{amsmath}
\usepackage{amsthm}
\usepackage{amssymb}
\usepackage{algorithm}
\usepackage{algorithmic}
\usepackage{ecltree}
\usepackage{enumerate}
\usepackage{xcolor}
\usepackage{comment}
\usepackage{tikz}
\usepackage{wrapfig}
\usepackage{mathrsfs}
\usepackage{mathtools}
\usepackage[margin=25truemm]{geometry}
\mathtoolsset{showonlyrefs=true}

\usepackage{bm}
\usepackage{bbm}
\usepackage{float}
\usepackage{cases}
\usepackage{longtable}
\usepackage{diagbox}

\newtheorem{proposition}{Proposition}
\newtheorem{lemma}[proposition]{Lemma}
\newtheorem{theorem}[proposition]{Theorem}
\newtheorem{corollary}[proposition]{Corollary}

\theoremstyle{definition}
\newtheorem{definition}[proposition]{Definition}

\newcommand{\NN}{{\mathcal{N}}}
\newcommand{\PP}{{\mathcal{P}}}

\newcommand{\Reach}{R}
\newcommand{\reach}{\tilde{R}}

\newcommand{\eqlab}[2]{\overset{(\mathrm{#1})}{#2}}

\makeatletter

\@addtoreset{equation}{section}
\makeatother

\title{The Computational Complexity of\\
the Ungar Games on Distributive Lattices}
\author{Kengo Hashimoto}
\date{}

\begin{document}
\maketitle

\begin{abstract}
The Ungar game, introduced by Defant et al., is a combinatorial game played on a finite lattice,
which originates from a geometric transformation introduced by Ungar to resolve Scott's slope problem.
When played on a distributive lattice $L$, the Ungar game reduces to repeatedly choosing and removing a non-empty set of maximal elements from a finite poset $P$, where $L$ is represented as the lattice $J(P)$ of the order ideals of $P$ by Birkhoff's representation theorem.
Defant et al.\ raised the open question of whether determining the outcome of the Ungar game on $J(P)$ is \textsf{PSPACE}-complete with respect to $|P|$.
In this paper, we settle this open question in the affirmative.
More precisely, we prove that the problem is \textsf{PSPACE}-complete even when restricted to posets $P$ with maximum chain length $l(P) \le 3$ such that the underlying graph of the Hasse diagram of $P$ is bipartite, where the length of a chain is defined as the number of elements in the chain minus one.
Furthermore, we show that the problem is \textsf{NP}-complete for $l(P) \le 2$, whereas the case $l(P) \le 1$ is known to belong to \textsf{LOGSPACE}.
\end{abstract}

\section{Introduction}
\label{sec:intro}

\subsection{Background} 
The \emph{Ungar games}, introduced by Defant et al.\ \cite{DKW24}, are combinatorial games (two-player games without chance elements or hidden information such as \textsc{Chess} and \textsc{Checkers}) played on a finite lattice $L$.
In the Ungar game on $L$, starting with $x$ set to the maximum element $1 \in L$, two players alternately perform a non-trivial \emph{Ungar move} \cite{DL23},
which replaces the element $x$ with $\bigwedge (\{x\} \cup T)$ for some non-empty set $T$ of elements covered by $x$.
The player who cannot make a legal move loses.\footnote{Ungar moves and Ungar games can be naturally extended to meet-semilattices as in \cite{DL23}.}

In the case where $L$ is a distributive lattice, $L$ is isomorphic to the lattice $J(P)$ of order ideals of a finite poset $P$ by Birkhoff's representation theorem, and the Ungar game on $L = J(P)$ is viewed as the following combinatorial game on $P$.
\begin{quote}
The game is played on the poset $P$.
In each move, a player chooses a non-empty $S \subseteq \max(P)$ and removes all the elements in $S$ from $P$
(i.e., replaces $P$ with $P \setminus S$),
where $\max(P)$ denotes the set of all maximal elements in $P$.
The player who cannot make a move in a turn because $P = \emptyset$ loses.
\end{quote}

Regarding the \emph{outcome} (i.e., which player has a winning strategy) of the Ungar game on $J(P)$,
Defant et al.\ \cite{DKW24} characterized the winning positions for the case $P = \mathbb{N}^2$ (i.e., $J(P)$ is Young's lattice).
Choi and Gan \cite{CG24} determined the winning positions for the case where $P$ is a shift staircase, settling an open conjecture posed in \cite{DKW24}. 
Paltrowitz \cite{Pal26} proved that the problem of determining the outcome of the Ungar game on $J(P)$ for a graded poset $P$ can be reduced to an evaluation of Boolean NAND logic circuits by extracting a forest structure called the \emph{skeleton} from $P$, establishing that the problem belongs to \textsf{LOGSPACE} and generalizing the result in \cite{DKW24} on $P = \mathbb{N}^2$ to
the higher-dimensional case $P = \mathbb{N}^d$.
On the other hand, Defant et al.\ \cite{DKW24} also asked the following question on the computational complexity of determining the outcome in the Ungar game on $J(P)$, which has remained open:
\begin{quote}
Are Ungar games on distributive lattices $J(P)$ \textsf{PSPACE}-complete in $|P|$?
\end{quote}

\subsection{Contribution and Organization} 
In this paper, we answer the above open question in the affirmative by showing that determining the outcome of the Ungar game on $J(P)$ is \textsf{PSPACE}-complete.
More precisely, we prove stronger results as Theorems \ref{thm:pspace} and \ref{thm:np} below, where $l(P)$ denotes the length (i.e., the number of elements minus one) of the longest chain in a poset $P$.

\newtheorem*{Thm:pspace}{\rm\bf Theorem~\ref{thm:pspace}}
\begin{Thm:pspace}
The problem of determining the outcome of the Ungar game on $J(P)$ for a given finite poset $P$ is \textsf{PSPACE}-complete even if
we restrict $P$ to the cases where $l(P) \leq 3$ and the underlying graph of the Hasse diagram of $P$ is bipartite.
\end{Thm:pspace}
\newtheorem*{Thm:np}{\rm\bf Theorem~\ref{thm:np}}
\begin{Thm:np}
The problem of determining the outcome of the Ungar game on $J(P)$ for a given finite poset $P$ with $l(P) \leq 2$ is \textsf{NP}-complete.
\end{Thm:np}

Note that in the case of $l(P) \leq 1$, the poset $P$ is a graded poset and thus the problem belongs to \textsf{LOGSPACE} as shown in \cite{Pal26}.

The remainder of this paper is organized as follows. 
In the rest of this section, we review additional related works not mentioned above.
In Section~\ref{sec:preliminaries}, we introduce necessary notations and definitions and establish several lemmas required to prove our main results. 
In Section~\ref{sec:main}, we present our main theorems: we prove Theorem~\ref{thm:pspace} in Section~\ref{subsec:pspace} and Theorem~\ref{thm:np} in Section~\ref{subsec:np}.

\subsection{Related Works} 
The conceptual origin of the Ungar games dates back to a geometric transformation introduced by Ungar \cite{Ung82} to resolve Scott's slope problem \cite{Sco70} on the minimum number of slopes determined by points in the plane that do not all lie on a single line.
This operation was extended to lattices and termed the Ungar move by Defant and Li \cite{DL23}.
They introduced \emph{Ungarian Markov chains}, which perform random Ungar moves on a finite lattice, and analyzed the expected number of steps needed to transition from the maximum element to the minimum element.

Regarding the Ungar games on non-distributive lattices,
Defant et al.\ \cite{DKW24} established several enumerative results regarding the number of winning positions for well-known families of lattices,
and proved that the problem of determining the outcome of the Ungar game on a lattice $L$ is $\mathsf{NC^1}$-hard with respect to $|L|$.
Furthermore, Choi and Gan \cite{CG24} also characterized the winning positions for the Ungar games on Young-Fibonacci lattices, thereby settling another open question posed in \cite{DKW24}.

Similar to the Ungar game on $J(P)$, \emph{poset games} are another well-known class of combinatorial games played on a poset.
A poset game is played on a finite poset $P$. In each move, a player chooses an arbitrary element $x \in P$ and removes $x$ together with all elements greater than $x$ from $P$.
The player who cannot make a legal move loses.
The problem of determining the outcome of the poset game on a given poset is known to be \textsf{PSPACE}-complete \cite{FGG22}.

\section{Preliminaries}
\label{sec:preliminaries}

For the sake of presentation in this paper,
we reinterpret the Ungar game on $J(P)$ as a combinatorial game played on a directed acyclic graph (DAG)
by viewing the removal of maximal elements of $P$ as the removal of nodes with in-degree $0$ in the Hasse diagram of $P$ as follows.
\begin{quote}
Initially, there is a finite DAG $G = (V, A)$.
A player's move is to choose a non-empty $S \subseteq L_0(V)$ and remove $S$ from $V$,
where $L_0(V)$ is the set of all nodes $x \in V$ with in-degree $0$.
Namely, $V$ is replaced with the subgraph of $G$ induced by $V \setminus S$ as the result of the move.
The player who cannot make a move in a turn because $V = \emptyset$ loses.
\end{quote}

We refer to the above game as \emph{the Ungar game on the DAG $G$ (or $V$)}.
Without loss of generality, we assume the DAG has no parallel arcs.
In a game with initial position $V$, 
each position is identified with the subset $U \subseteq V$ of the remaining nodes in the DAG, 
or equivalently, the subgraph of $G$ induced by $U$. 
For simplicity, we refer to both the position and the induced subgraph merely as the \emph{position $U$} or the \emph{DAG $U$}, omitting the arc set $A$ unless necessary. 

For a DAG $V$, the \emph{outcome $o(V)$ of $V$}, representing which player has a winning strategy in $V$, is formally defined by the following recursive formula.
\begin{definition}
\begin{align}
o(V) = \begin{cases}
\NN &\,\,\text{if}\,\,\PP \in \{o(V \setminus S) : S \subseteq L_0(V), S \neq \emptyset\},\\
\PP &\,\,\text{otherwise}.
\end{cases}
\end{align}
\end{definition}
A position $V$ is called an \emph{$\NN$-position} (resp.~\emph{$\PP$-position}) if $o(V) = \NN$ (resp.~$o(V) = \PP$).
Intuitively, an $\NN$-position represents a winning position for the next player, as there exists at least one legal move leading to a position that is losing for the opponent.
Conversely, a $\PP$-position represents a losing position for the next player, as every legal move leads to a winning position for the opponent.\footnote{In the literature on the Ungar games, \(\NN\)-positions and \(\PP\)-positions are referred to as \emph{Atniss win} and \emph{Eeta win}, respectively. In this paper, we adopt a conventional notation of combinatorial game theory.}

Let $d^+_V(x)$ and $d^-_V(x)$ denote the out-degree and in-degree of a node $x$ in a DAG $V$.
For $x, y \in V$, if there exists an arc from $x$ to $y$, then we write $x \to y$.
If $x \to y$ and $d^{-}_V(y) = 1$ (i.e., $y$ receives an arc only from $x$), then we write $x \twoheadrightarrow y$.
A \emph{path} on $V$ is a non-empty sequence $p = (x_1, x_2, \ldots, x_n)$ of nodes of $V$ with $x_1 \to x_2 \to \cdots \to x_n$,
whose length is defined as $n-1$ (i.e., the number of arcs through the path).
Let $l_V(x)$ denote the maximum length of all paths on $V$ ending with a node $x \in V$.
For a non-negative integer $k$, we define $L_k(V) \coloneqq \{x \in V : l_V(x) = k\}$.
We also define $l(V) \coloneqq \max_{x \in V} l_V(x)$ as the maximum length of a path on $V$.

In addition, we introduce the following notations for the sets of the nodes reachable from a given node or set of nodes.
\begin{definition}
\label{def:ideal}
For a DAG $V$, we introduce the following definitions (i) and (ii).
\begin{enumerate}[(i)]
\item $\Reach_{V}(x) \coloneqq \{x' \in V : x \rightsquigarrow x'\}$ and $\reach_{V}(x) \coloneqq \Reach_{V}(x) \setminus \{x\}$,
where $x \rightsquigarrow x'$ denotes that there exists a path from $x$ to $x'$.
\item For $U \subseteq V$, we define $\Reach_{V}(U) \coloneqq \bigcup_{u \in U} \Reach_{V}(u)$ and $\reach_{V}(U) \coloneqq \bigcup_{u \in U} \reach_{V}(u)$.
\end{enumerate}
\end{definition}
In the above definition, note that $x \rightsquigarrow x$ for any $x \in V$ because the sequence $(x)$ is a path on $V$ of length $0$.
Directly from the definition, we have
\begin{align}
\reach(\Reach(x)) &\subseteq \Reach(\Reach(x)) = \Reach(x), \label{eq:w1v090xsyy5q}\\
L_0(U) &= U \setminus \reach(U). \label{eq:kqut8anmwdmy}
\end{align}

In the above notations, the subscript $V$ indicating the underlying DAG is omitted when it is clear from the context.

A node $x$ of a DAG $V$ is said to be \emph{available in $V$} if $x \in L_0(V)$.
Equivalently, a legal move on $V$ is removing one or more available nodes in $V$.

If there exists an available node $x$ such that removing it does not create a new available node,
then it is seen that $o(V) = \NN$ from the so-called strategy-stealing argument.
If $V' \coloneqq V \setminus \{x\}$ is a $\PP$-position, then we immediately see that $o(V) = \NN$ since there is a legal move from $V$ to $V'$.
If $V'$ is an $\NN$-position, then $V'$ must have a legal move to some $\PP$-position $V'' \coloneqq V' \setminus S$ by removing some non-empty $S \subseteq L_0(V')$.
However, since our assumption ensures that $V'$ does not contain any nodes that became available as a result of the removal of $x$,
removing $\{x\} \cup S$ was a legal move from the very beginning position $V$ to reach $V'' = V \setminus (\{x\} \cup S)$, so that $o(V) = \NN$.
We formalize this argument as follows.

\begin{definition}
Let $V$ be a DAG. A node $x \in V$ is said to be \emph{dominated in $V$}
if $\reach(x) \subseteq \reach(V \setminus \{x\})$.
\end{definition}

\begin{lemma}
\label{thm:dominated}
If there exists an available dominated node in a DAG $V$, then $o(V) = \NN$.
\end{lemma}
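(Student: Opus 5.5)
The proof formalizes the strategy-stealing argument given just before the definition. Let $x$ be an available dominated node of $V$ and set $V' \coloneqq V \setminus \{x\}$. If $o(V') = \PP$, then removing $S = \{x\}$ is a legal move from $V$ to a $\PP$-position, so $o(V) = \NN$ directly from the recursive definition. It therefore remains to treat the case $o(V') = \NN$.

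In that case, the definition of $o$ gives a non-empty $S \subseteq L_0(V')$ with $o(V' \setminus S) = \PP$. I would show that $\{x\} \cup S$ is a legal move in $V$, that is, $\{x\} \cup S \subseteq L_0(V)$. Since $V \setminus (\{x\} \cup S) = V' \setminus S$ is a $\PP$-position, this gives $o(V) = \NN$. The move is non-empty, and $x \in L_0(V)$ by assumption. So the only thing to check is $S \subseteq L_0(V)$, which I would get from the key inclusion $L_0(V') \subseteq L_0(V)$.

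The key inclusion is the main step, and it is where the domination hypothesis enters. Take $y \in L_0(V')$ and suppose for contradiction that $y \notin L_0(V)$. Then $y$ has an in-neighbour $z \in V$. If $z \neq x$, then $z \in V'$ and the arc $z \to y$ survives in the induced subgraph $V'$, contradicting $y \in L_0(V')$. Hence $x \to y$, so $y \in \reach_V(x) \subseteq \reach_V(V \setminus \{x\})$. This means there is a node $u \in V'$ and a path of positive length from $u$ to $y$ in $V$; take $w$ to be the last node before $y$ on that path, so $w \to y$.

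The point needing care is that reachability in the hypothesis is taken in $V$, while availability is measured in $V'$. If $w \neq x$, then $w \in V'$ and $w \to y$ in $V'$, a contradiction. If $w = x$, then $x$ is reachable from $u \neq x$, so $x$ has an in-neighbour on that path, contradicting $x \in L_0(V)$. In both cases $y \in L_0(V)$, which proves the inclusion and completes the argument.
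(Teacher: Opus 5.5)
Your proof is correct and follows the same route as the paper: split on $o(V \setminus \{x\})$, and in the $\NN$ case extend the winning reply $S$ to the move $\{x\} \cup S$, justified by the inclusion $L_0(V \setminus \{x\}) \subseteq L_0(V)$, which comes from domination. The paper proves that inclusion at the level of sets, via $\reach(V \setminus \{x\}) = \reach(V)$ and the identity $L_0(U) = U \setminus \reach(U)$, whereas you argue element-wise; your observation that no path from a node of $V \setminus \{x\}$ can pass through the available node $x$ is exactly what justifies the paper's unstated identification of reachability in $V$ with reachability in $V \setminus \{x\}$.
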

\begin{proof}
Let $x \in L_0(V)$ be dominated.
Since removing $\{x\} \subseteq L_0(V)$ is a legal move, if $o(V \setminus \{x\}) = \PP$, then we are done.
Assume $o(V \setminus \{x\}) = \NN$.
Then there exists a non-empty $S \subseteq L_0(V \setminus \{x\})$ such that
\begin{align}
o((V\setminus\{x\}) \setminus S) = \PP. \label{eq:igk3rvk8v2or}
\end{align} 
We have
\begin{align}
\reach(V \setminus \{x\})
\eqlab{A}= \reach(V \setminus \{x\}) \cup \reach(x)
= \reach(V), \label{eq:pizlu51kv7kc}
\end{align}
where (A) follows since $x$ is dominated in $V$.
Hence,
\begin{align}
S
\subseteq L_0(V \setminus \{x\})
\eqlab{A}= (V \setminus \{x\}) \setminus \reach(V \setminus \{x\})
\eqlab{B}= (V \setminus \{x\}) \setminus \reach(V)
\subseteq V \setminus \reach(V)
\eqlab{C}= L_0(V), \label{eq:fm1vaz90ebn6}
\end{align}
where
(A) follows from \eqref{eq:kqut8anmwdmy},
(B) follows from \eqref{eq:pizlu51kv7kc},
and (C) follows from \eqref{eq:kqut8anmwdmy}.
Combining this and $\{x\} \subseteq L_0(V)$,
we obtain $S \cup \{x\} \subseteq L_0(V)$, so that
removing $S \cup \{x\}$ is a legal move from the position $V$, and thus we conclude $o(V) = \NN$ by \eqref{eq:igk3rvk8v2or}.
\end{proof}

Note that $x$ with out-degree $0$ is trivially dominated.
In particular, an isolated node is an available dominated node,
which yields the following corollary.

\begin{corollary}
\label{cor:isolated}
If a DAG $V$ has an isolated node, then $o(V) = \NN$.
\end{corollary}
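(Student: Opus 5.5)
The plan is to derive the corollary directly from Lemma~\ref{thm:dominated}. It suffices to show that an isolated node $x$ of $V$ is both available and dominated in $V$; the lemma then gives $o(V) = \NN$.

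\emph{Availability.} Since $x$ is isolated, it has no incoming arcs, so $d^-_V(x) = 0$ and $x \in L_0(V)$.

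\emph{Domination.} Since $x$ also has no outgoing arcs, the only path starting at $x$ is the trivial path $(x)$. Hence $\Reach_V(x) = \{x\}$ and $\reach_V(x) = \emptyset$. The empty set is contained in any set, so
\begin{align}
\reach_V(x) = \emptyset \subseteq \reach_V(V \setminus \{x\}),
\end{align}
which is exactly the definition of $x$ being dominated in $V$.

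This is the remark just before the corollary, namely that any node of out-degree $0$ is trivially dominated, specialized to isolated nodes. One edge case deserves a check: if $V = \{x\}$, then $V \setminus \{x\} = \emptyset$ and $\reach_V(\emptyset) = \emptyset$, so the inclusion $\emptyset \subseteq \emptyset$ still holds. Lemma~\ref{thm:dominated} therefore applies, which is consistent with the direct observation that removing $x$ leaves the $\PP$-position $\emptyset$.

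I see no real obstacle. The only point needing care is checking that both hypotheses of Lemma~\ref{thm:dominated} (availability and domination) are verified from the definitions, including the degenerate case above.
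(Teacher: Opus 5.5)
Your proposal is correct and matches the paper's argument: an isolated node has in-degree $0$, so it is available, and out-degree $0$, so $\reach_V(x) = \emptyset$ and it is trivially dominated; Lemma~\ref{thm:dominated} then gives $o(V) = \NN$. Your check of the degenerate case $V = \{x\}$ is a harmless extra that the paper leaves implicit.
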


A node $x$ of a DAG $V$ is called a \emph{leaf of $V$} if $d^+(x) = 0$ and $x \twoheadleftarrow \hat{x}$ for some $\hat{x}$.
If $V$ has a leaf $x$ with $x \twoheadleftarrow \hat{x}$, then
the removal of $\hat{x}$ leaves $x$ isolated and results in an $\NN$-position by Corollary \ref{cor:isolated},
forcing a loss for the player making such a move.
Therefore, even if we prohibit removing $\hat{x}$, the outcome remains the same.
In that case, the nodes of $\Reach(\hat{x})$ would no longer be removed,
and thus the outcome remains the same even if we treat the nodes of $R(\hat{x})$ as nonexistent.
We formalize this argument as the following lemma.

\begin{lemma}
\label{thm:ignore}
Let $V$ be a DAG. For any leaf $x \in V$, we have $o(V) = o(V \setminus \Reach(\hat{x}))$, where $\hat{x} \twoheadrightarrow x$.
\end{lemma}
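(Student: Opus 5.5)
The plan is to prove a slightly stronger statement by induction on $|U|$, for positions $U$ reachable from $V$ that still contain $\hat{x}$. Positions reachable in play are closed under successors: only available nodes are ever removed, so if $u \in U$ and $u \to v$ then $v \in U$. In particular, if $\hat{x} \in U$ then $\Reach_V(\hat{x}) \subseteq U$, and $x \in U$. For such $U$ put $W_U \coloneqq U \setminus \Reach(\hat{x})$. The claim is $o(U) = o(W_U)$; taking $U = V$ gives the lemma.

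The first structural step is to compare the available nodes of $U$ and of $W_U$. Let $y \in W_U$. None of its in-neighbours lies in $\Reach(\hat{x})$, since otherwise $y$ would itself be reachable from $\hat{x}$. Hence $y$ has the same in-neighbours in $U$ and in $W_U$, which gives $L_0(W_U) = L_0(U) \cap W_U$. Moreover, every node of $\reach(\hat{x})$ has an in-neighbour in $\Reach(\hat{x}) \subseteq U$, so it is not available in $U$. Therefore $L_0(U) \subseteq L_0(W_U) \cup \{\hat{x}\}$, and the two sets differ at most by $\hat{x}$.

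Next, split the legal moves $S \subseteq L_0(U)$ according to whether $\hat{x} \in S$.

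If $\hat{x} \in S$: in $U \setminus S$ the node $x$ has lost its unique in-neighbour and has out-degree $0$, so it is isolated. By Corollary~\ref{cor:isolated}, $o(U \setminus S) = \NN$, so such moves never lead to a $\PP$-position.

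If $\hat{x} \notin S$: then $S$ is a non-empty subset of $L_0(W_U)$. Conversely, every non-empty subset of $L_0(W_U)$ is a legal move in $U$ avoiding $\hat{x}$. The position $U \setminus S$ still contains $\hat{x}$, and $W_{U \setminus S} = W_U \setminus S$. The induction hypothesis therefore gives $o(U \setminus S) = o(W_U \setminus S)$.

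Putting the cases together: $U$ has a move to a $\PP$-position if and only if some non-empty $S \subseteq L_0(W_U)$ has $o(W_U \setminus S) = \PP$, that is, if and only if $o(W_U) = \NN$. This also covers the base case. When $W_U = \emptyset$, the only moves from $U$ contain $\hat{x}$, so $o(U) = \PP = o(\emptyset)$.

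The main point needing care is the identity $L_0(W_U) = L_0(U) \setminus \{\hat{x}\}$ together with the closure property of positions. This is what makes the game on $U$ without removing $\hat{x}$ exactly isomorphic to the game on $W_U$. Everything else is bookkeeping around Corollary~\ref{cor:isolated}.
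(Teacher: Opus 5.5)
Your proof is correct and follows essentially the same route as the paper. Both use induction on the size of the position, discard moves containing $\hat{x}$ via Corollary~\ref{cor:isolated}, and match the remaining moves with those of $V \setminus \Reach(\hat{x})$ through the identity $L_0(V)\setminus\{\hat{x}\} = L_0(V\setminus\Reach(\hat{x}))$. The differences are cosmetic: you induct over positions reachable from $V$, using closure under successors, rather than over all DAGs, and you prove the identity by an in-neighbour argument rather than the paper's set-algebra chain. Your framing also makes explicit that $\Reach(\hat{x})$ is unchanged after a move avoiding $\hat{x}$, a point the paper leaves implicit.
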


\begin{proof}
We proceed by induction on $|V|$.

We first claim that for any non-empty $S \subseteq L_0(V)$, we have
\begin{align}
o(V \setminus S) = 
\begin{cases}
\NN &\,\,\text{if}\,\,\hat{x} \in S,\\
o((V \setminus S) \setminus \Reach(\hat{x})) &\,\,\text{if}\,\,\hat{x} \not\in S\\
\end{cases}
\label{eq:0k9jvt8krszy}
\end{align}
as follows.

\begin{itemize}
\item The case $\hat{x} \in S$:
Since $\hat{x} \twoheadrightarrow x$,
the node $x$ is isolated in $V \setminus S$, so that $o(V \setminus S) = \NN$ from Corollary \ref{cor:isolated}.

\item The case $\hat{x} \not\in S$: We may apply the induction hypothesis to $V \setminus S$ to obtain $o(V \setminus S) =  o((V \setminus S) \setminus \Reach(\hat{x}))$.
\end{itemize}
Note that in both cases, the node $x$ indeed remains in $V \setminus S$ because
\begin{align}
\{x\} \eqlab{A}\subseteq \reach(\hat{x}) \subseteq \reach(V) \eqlab{B}= V \setminus L_0(V) \subseteq V \setminus S,
\label{eq:fkaij8rvkd2c}
\end{align}
where
(A) follows since $\hat{x} \to x$,
and (B) follows from \eqref{eq:kqut8anmwdmy}. 

Finally, we have
\begin{align}
o(V) = \NN
&\iff \PP \in \{o(V \setminus S) : S \subseteq L_0(V), S \neq \emptyset\}\\
&\iff \PP \in \{o(V \setminus S) : S \subseteq L_0(V), S \owns \hat{x}\} \cup \{o(V \setminus S) : S \subseteq L_0(V) \setminus \{\hat{x}\}, S \neq \emptyset\}\\
&\eqlab{A}\iff \PP \in \{o(V \setminus S) : S \subseteq L_0(V) \setminus \{\hat{x}\}, S \neq \emptyset\}\\
&\eqlab{B}\iff \PP \in \{o((V \setminus \Reach(\hat{x})) \setminus S) : S \subseteq L_0(V) \setminus \{\hat{x}\}, S \neq \emptyset\}\\
&\eqlab{C}\iff \PP \in \{o((V \setminus \Reach(\hat{x})) \setminus S) : S \subseteq L_0(V \setminus \Reach(\hat{x})), S \neq \emptyset\}\\
&\iff o(V \setminus \Reach(\hat{x})) = \NN
\end{align}
as desired, where
(A) follows because $\{o(V \setminus S) : S \subseteq L_0(V), S \owns \hat{x}\} \subseteq \{\NN\}$ by the first case of \eqref{eq:0k9jvt8krszy},
(B) follows from the second case of \eqref{eq:0k9jvt8krszy},
and (C) is justified as follows:
\begin{align}
L_0(V) \setminus \{\hat{x}\}
&\eqlab{A}= V \setminus (\reach(V) \cup \{\hat{x}\})\\
&\eqlab{B}= V \setminus (\reach(V) \cup \reach(\hat{x}) \cup \{\hat{x}\})\\
&= V \setminus (\reach(V) \cup \Reach(\hat{x}))\\
&= V \setminus \left(\reach(V \setminus \Reach(\hat{x})) \cup \reach(\Reach(\hat{x})) \cup \Reach(\hat{x})\right)\\
&\eqlab{C}= V \setminus \left(\reach(V \setminus \Reach(\hat{x})) \cup \Reach(\hat{x})\right)\\
&= (V \setminus \Reach(\hat{x})) \setminus \reach(V \setminus \Reach(\hat{x}))\\
&\eqlab{D}= L_0(V \setminus \Reach(\hat{x})),
\end{align}
where
(A) follows from \eqref{eq:kqut8anmwdmy},
(B) follows since $\reach(\hat{x}) \subseteq \reach(V)$,
(C) follows from \eqref{eq:w1v090xsyy5q},
and (D) follows from \eqref{eq:kqut8anmwdmy}.
\end{proof}

\section{Main Results}
\label{sec:main}

In this section, we establish our main results on the computational complexity of the Ungar game on a DAG.
To prove the hardness results, we construct DAGs for the Ungar game that simulate instances of the source problems.
For the construction of DAGs in the proofs, we introduce a gadget denoted by $\mathrm{SIM}(m, n)$, which enforces a move in the Ungar game on a DAG to simultaneously (hence named $\mathrm{SIM}$) remove exactly one node from a set $\{x_1, \ldots, x_m\}$ of nodes and exactly one node from a set $\{y_1, \ldots, y_n\}$ of nodes, as follows.

For a positive integer $n$, let $[n] \coloneqq \{1, 2, \ldots, n\}$.
For positive integers $m$ and $n$, the gadget $\mathrm{SIM}(m, n)$ is a subgraph $(\hat{V}, \hat{A})$ with
\begin{align}
\hat{V} &\coloneqq \bigcup_{i \in [m]}\{x_i, \alpha_i\} \cup
\bigcup_{j \in [n]} \{y_j, \gamma_j\} \cup
\bigcup_{i, j \in [m], i < j} \{p_{i, j}\} \cup
\bigcup_{i, j \in [n], i < j} \{q_{i, j}\} \cup
\bigcup_{i \in [m], j \in [n]}\{\beta_{i, j}, \delta_{j, i}\},\\
\hat{A} &\coloneqq \bigcup_{i \in [m]} \{x_i \to \alpha_i\} \cup
\bigcup_{j \in [n]} \{y_j \to \gamma_j\}\\
&\quad \cup \bigcup_{i, j \in [m], i < j}\{x_i \to p_{i, j} \leftarrow x_j\} \cup \bigcup_{i, j \in [n], i < j} \{y_i \to q_{i, j} \leftarrow y_j\}\\
&\quad \cup \bigcup_{i \in [m], j \in [n]}\{\alpha_i \to \beta_{i, j}, \gamma_j \to \delta_{j, i}, x_i \to \delta_{j, i}, y_j \to \beta_{i, j}\},
\end{align}
where $x \to y$ denotes an arc from $x$ to $y$, and $x \to y \leftarrow z$ denotes the two arcs $x \to y$ and $z \to y$.
When this gadget is embedded into a DAG $(V, A)$ as a subgraph, only the nodes $x_1, x_2, \ldots, x_m, y_1, y_2, \ldots, y_n$ may serve as endpoints for other arcs of $(V, A)$ outside the gadget.
All other nodes in $\hat{V}$ cannot be connected to any arcs of $(V, A)$ other than those defined above for the gadget to work correctly.

For example, Figures~\ref{fig:gad-1-1}, \ref{fig:gad-1-mul}, and \ref{fig:gad-mul-mul} illustrate
$\mathrm{SIM}(1, 1), \mathrm{SIM}(3, 1)$, and $\mathrm{SIM}(3, 2)$, respectively.
In these figures, nodes marked as black circles may be endpoints of any arc other than those shown in the figure, whereas those marked as white circles may not.
\begin{figure}[htbp]
\centering
\includegraphics[keepaspectratio,scale=1.5]{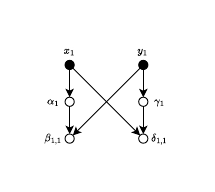}
\caption{A gadget $\mathrm{SIM}(1, 1)$}
\label{fig:gad-1-1}
\end{figure}
\begin{figure}[htbp]
\centering
\includegraphics[keepaspectratio,scale=1]{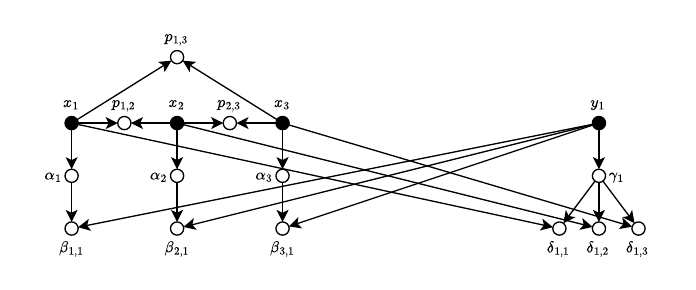}
\caption{A gadget $\mathrm{SIM}(3, 1)$}
\label{fig:gad-1-mul}
\end{figure}
\begin{figure}[htbp]
\centering
\includegraphics[keepaspectratio,scale=1]{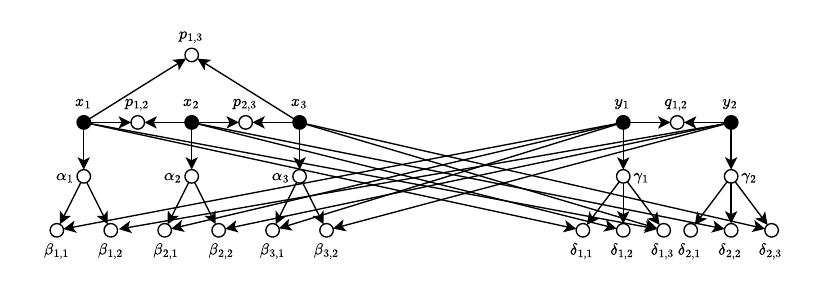}
\caption{A gadget $\mathrm{SIM}(3, 2)$}
\label{fig:gad-mul-mul}
\end{figure}

A $\mathrm{SIM}(m, n)$ gadget enforces a simultaneous selection of exactly one node from each of $X \coloneqq \{x_1, \ldots, x_m\}$ and $Y \coloneqq \{y_1, \ldots, y_n\}$ by ensuring that any violation immediately hands a winning position to the opponent, as follows.
The nodes $p_{i, j}$ (resp.~$q_{i, j}$) guarantee that at most one node of $X$ and at most one node of $Y$ are removed.
Indeed, removing two or more nodes from $X$ (resp.~$Y$) leaves some $p_{i, j}$ (resp.~$q_{i, j}$) isolated, handing a winning position to the opponent by Corollary \ref{cor:isolated}.
Moreover, if a player removes a node of $X$, they are forced to remove a node of $Y$ in the same turn, and vice versa because
if a player removes some $x_i$ but leaves all $y_1, y_2, \ldots, y_n$ unremoved, then $\alpha_i$ becomes an available dominated node, turning the resulting graph into an $\NN$-position by Lemma \ref{thm:dominated}.

Conversely, suppose both some $x_i$ and some $y_j$ are removed in a single move,
where we may let $(i, j) = (1, 1)$ without loss of generality.
Then the remaining parts in the gadget can be safely ignored:
by the removal, the nodes $\beta_{1, 1}, \beta_{2, 1}, \ldots, \beta_{m, 1}, \delta_{1, 1}, \delta_{2, 1}, \ldots, \delta_{n, 1}, p_{1, 2}, p_{1, 3}, \ldots, \allowbreak p_{1, m}, q_{1, 2}, q_{1, 3}, \ldots, q_{1, n}$ become leaves,
so that all the remaining parts can be extinguished by Lemma \ref{thm:ignore} without changing the outcome
(equivalently, making any move on the remaining part leaves an isolated node, handing a winning position to the opponent).

\subsection{The \textsf{PSPACE}-completeness}
\label{subsec:pspace}

We show that the problem of determining the outcome of the Ungar game on a given DAG is \textsf{PSPACE}-complete.
The hardness is shown by a polynomial-time reduction from \textsc{Node-Kayles}, which is a combinatorial game played on an undirected graph as follows.
\begin{quote}
The game is played on an undirected graph $\Gamma = (V_{\Gamma}, E_{\Gamma})$.
In each turn, a player selects a node of ${\Gamma}$.
Once a node $x$ has been selected, neither that node $x$ nor any of its adjacent nodes can be selected no longer.
The player who cannot make a legal move loses.
\end{quote}
The problem of determining the outcome of \textsc{Node-Kayles} on a given graph $\Gamma$ is known to be \textsf{PSPACE}-complete \cite{Sch78}.

One of our main results is formally stated as follows, reformulating the statement in Section~\ref{sec:intro} to align with the formulation by a DAG.
\begin{theorem}
\label{thm:pspace}
The problem of determining the outcome of the Ungar game on a given DAG $V$ is \textsf{PSPACE}-complete even if
we restrict $V$ to the cases where $l(V) \leq 3$ and the underlying graph of $V$ is bipartite.
\end{theorem}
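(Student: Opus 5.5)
Membership in \textsf{PSPACE} is the routine half. The game on $V$ lasts at most $|V|$ turns, since every move removes at least one node. A depth-first evaluation of the recursive definition of $o(\cdot)$ therefore needs recursion depth at most $|V|$. At each level we store the current position (a subset of $V$) and enumerate the candidate sets $S \subseteq L_0(U)$ one at a time in lexicographic order, so the total space is polynomial in $|V|$.

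For hardness I reduce from \textsc{Node-Kayles} on a graph $\Gamma = (V_\Gamma, E_\Gamma)$, using the $\mathrm{SIM}$ gadget. The intended simulation is that selecting a vertex $v$ in \textsc{Node-Kayles} corresponds to one Ungar move that removes a designated top node $a_v$ together with exactly one node from each of a few auxiliary sets. Selecting $v$ must block $v$ and all its neighbours, so I attach, for each edge $uv \in E_\Gamma$, a node $e_{uv}$ with arcs $a_u \to e_{uv} \leftarrow a_v$. If $v$ is selected, $e_{uv}$ loses the in-arc from $a_v$ and $a_u \twoheadrightarrow e_{uv}$, so $e_{uv}$ is a leaf. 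Removing $a_u$ later would then isolate $e_{uv}$, and by Corollary~\ref{cor:isolated} and Lemma~\ref{thm:ignore} we may regard $\Reach(a_u)$ as deleted, which is exactly the \textsc{Node-Kayles} blocking rule. Likewise, $a_v$ itself must not be selectable twice, which is automatic since it is removed.

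The real issue is that a player might remove several $a_v$ in one move. To enforce that exactly one vertex is chosen per turn, I connect all the $a_v$ as the $X$-side of a $\mathrm{SIM}(|V_\Gamma|, n)$ gadget. Its $Y$-side consists of fresh nodes $y_j$, each hanging on a private pendant arc $y_j \to z_j$ so that the $y_j$ are otherwise inert. The $p_{i,j}$ nodes then forbid removing two $a$'s at once, and the $\alpha/\gamma$ mechanism forbids removing an $a$ without a $y$ and vice versa. The obstacle is that a gadget is consumed after one use: once some $x_i$ and $y_j$ are removed, the rest of the gadget is ignorable by Lemma~\ref{thm:ignore}. I therefore plan to use a separate $\mathrm{SIM}(|V_\Gamma|,1)$ gadget for each potential turn $t = 1,\dots,|V_\Gamma|$, splitting each vertex into copies $a_{v,t}$. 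The $\mathrm{SIM}$ gadgets would be chained by tying turn $t+1$'s $y$-node to become available only after turn $t$'s gadget fires, using shared children in the same style as the $e_{uv}$ nodes. I expect designing this so that it is simultaneously correct and of height $\le 3$ to be the main obstacle.

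The alternative I would try first is simpler: make $a_v$ the $X$-nodes of one gadget only for the local choice. Each vertex gets its own $\mathrm{SIM}(\deg+1,1)$-like block, and the "pick one per turn" constraint comes from $p$-type nodes $a_u \to p_{uv} \leftarrow a_v$ for all pairs $u \ne v$. These nodes become leaves once one endpoint is gone, so they never bother later moves. For the correctness proof I then show by induction on the number of remaining unblocked vertices that the reduced position, after applying Lemma~\ref{thm:ignore} to strip dead parts, is isomorphic to the canonical DAG of the induced subgraph of $\Gamma$ on the unblocked vertices, whenever play is legal. Any deviating move leaves an isolated node or an available dominated node, and so loses by Corollary~\ref{cor:isolated} or Lemma~\ref{thm:dominated}. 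Finally I verify the structural constraints. Levels are $a/y$ at level $0$, then $\alpha, p, e, \gamma$, then $\beta, \delta$, so all arcs go between consecutive levels. Bipartiteness then follows from the parity of levels, provided every arc joins levels of opposite parity, and $l(V)\le 3$ follows by counting the layers, with at most one extra pendant layer allowed.
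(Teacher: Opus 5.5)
\textbf{There is a genuine gap in the hardness part.} Your \textsf{PSPACE} membership argument is fine. The problem is that the construction you actually commit to (the ``simpler alternative'') does not simulate \textsc{Node-Kayles}.

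With a single copy $a_v$ per vertex and nodes $a_u \to p_{uv} \leftarrow a_v$ for all pairs $u \neq v$, the first selection of some $a_v$ turns every $p_{uv}$ into a leaf with $a_u \twoheadrightarrow p_{uv}$. By Lemma~\ref{thm:ignore}, all of $\Reach(a_u)$ is then dead for every $u \neq v$, so no second vertex can ever be chosen. At best you simulate \textsc{Node-Kayles} on the complete graph, and your inductive invariant (reduced position isomorphic to the canonical DAG of the unblocked subgraph) already fails after one move.

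Your sentence ``these nodes become leaves once one endpoint is gone, so they never bother later moves'' has it backwards. A leaf child permanently freezes its parent, which is exactly the mechanism you exploit for the $e_{uv}$ nodes. It is also why a single $\mathrm{SIM}$ gadget is consumed after one use, as you noted yourself; the alternative merely reintroduces that problem. The same misconception breaks the pendant arcs $y_j \to z_j$. Each $z_j$ is a leaf of $y_j$, so $\Reach(y_j)$ may be deleted. After that, every $\alpha_i$ is a leaf of $x_i$, and the whole gadget, including the $a$'s, collapses.

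Your per-turn plan (one $\mathrm{SIM}(|V_\Gamma|,1)$ per turn, with vertex copies) is the right idea and is what the paper does. However, you leave its crux, the chaining between turns, undesigned, and the mechanism you suggest cannot work. A shared sink child in the style of $e_{uv}$ can only freeze a parent; it never makes anything available. Availability is governed by in-arcs, so the next turn's $X$-nodes must receive arcs from nodes removed while turn $t$ is played.

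The paper does this as follows. It uses a $\mathrm{SIM}(n,1)$ with $X=\{v_1^{(t)},\ldots,v_n^{(t)}\}$ and $Y=\{\alpha^{(t)}\}$. This is followed by two chained $\mathrm{SIM}(1,1)$ gadgets (via $\alpha^{(t)} \to \gamma^{(t)}$ and $\zeta^{(t)} \to \iota^{(t)}$), whose last node $\mu^{(t)}$ has arcs $\mu^{(t)} \to v_i^{(t+1)}$. Each simulated turn is then three forced Ungar moves, an odd number, so the players alternate correctly.

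Once vertices are copied, ``not selectable twice'' is no longer automatic, and blocking must act across turns. You need $v_i^{(t)} \to q_i^{(t,u)} \leftarrow v_i^{(u)}$, together with $v_i^{(t)} \to r_{i,j}^{(t,u)} \leftarrow v_j^{(u)}$ and $v_j^{(t)} \to r_{j,i}^{(t,u)} \leftarrow v_i^{(u)}$, for all $t<u$ and $\{i,j\} \in E_\Gamma$.

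Finally, your bipartiteness check is wrong as stated. Inside $\mathrm{SIM}$, the arcs $x_i \to \delta_{j,i}$ and $y_j \to \beta_{i,j}$ join levels $0$ and $2$, so level parity is not a proper colouring. The path $x_i \to \alpha_i \to \beta_{i,j} \leftarrow y_j$ forces the $X$- and $Y$-sides into opposite classes. A valid colouring must therefore put all copies $v_i^{(t)}$ on one side and all $\alpha^{(t)}$ on the other, and the inter-turn connection must respect this. Your ``$a/y$ at level $0$'' layout does not do that.
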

\begin{proof}
Since every play on the Ungar game starting from the initial position $V$ terminates within $|V|$ moves,
the outcome can be computed in polynomial space by recursive and exhaustive traversal of all subpositions,
so that our problem belongs to \textsf{PSPACE}.

We prove the \textsf{PSPACE}-hardness by showing a polynomial-time reduction from \textsc{Node-Kayles}.
Suppose that we are given an undirected graph $\Gamma = (V_{\Gamma}, E_{\Gamma})$ for \textsc{Node-Kayles},
where $V_{\Gamma} = \{v_1, v_2, \ldots, v_n\}$. We construct a DAG $G = (V_G, A_G)$ for the Ungar game as follows:
\begin{align}
V_G &\coloneqq \bigcup_{t, i \in [n]}\{v_i^{(t)}, w_i^{(t)}, x_i^{(t)}, y_i^{(t)}\}\\
&\quad \cup \bigcup_{t \in [n]}\{\alpha^{(t)}, \beta^{(t)},
\gamma^{(t)}, \delta^{(t)}, \epsilon^{(t)}, \zeta^{(t)}, \eta^{(t)}, \theta^{(t)},
\iota^{(t)}, \kappa^{(t)}, \lambda^{(t)}, \mu^{(t)}, \nu^{(t)}, \xi^{(t)}\}\\
&\quad \cup \bigcup_{t, i, j \in [n], i < j}\{p_{i, j}^{(t)}\}
\cup \bigcup_{t, u, i\in [n], t < u}\{q_i^{(t, u)}\}
\cup \bigcup_{t, u \in [n], t < u, \{i, j\} \in E_{\Gamma}} \{r_{i, j}^{(t, u)}, r_{j, i}^{(t, u)}\},\\
A_G &\coloneqq \bigcup_{t, i \in [n]}\{v_i^{(t)} \to w_i^{(t)} \to x_i^{(t)}, v_i^{(t)} \to y_i^{(t)},
\alpha^{(t)} \to x_i^{(t)}, \beta^{(t)} \to y_i^{(t)}\}\\
&\quad \cup \bigcup_{t \in [n]}\{\alpha^{(t)} \to \beta^{(t)}, \alpha^{(t)} \to \gamma^{(t)} \to \delta^{(t)} \to \epsilon^{(t)}, \gamma^{(t)} \to \theta^{(t)}, \zeta^{(t)} \to \eta^{(t)} \to \theta^{(t)}, \zeta^{(t)} \to \epsilon^{(t)},\\
&\qquad\zeta^{(t)} \to \iota^{(t)} \to \kappa^{(t)} \to \lambda^{(t)}, \iota^{(t)} \to \xi^{(t)}, \mu^{(t)} \to \nu^{(t)} \to \xi^{(t)}, \mu^{(t)} \to \lambda^{(t)}\}\\
&\quad\cup \bigcup_{t \in [n-1], i \in [n]} \{\mu^{(t)} \to v_i^{(t+1)}\}\\
&\quad\cup \bigcup_{t, i, j \in [n], i < j} \{v_i^{(t)} \to p_{i, j}^{(t)} \leftarrow v_j^{(t)}\}\\
&\quad\cup \bigcup_{t, u, i \in [n], t < u} \{v_i^{(t)} \to q_{i}^{(t, u)} \leftarrow v_i^{(u)}\}\\
&\quad\cup \bigcup_{t, u \in [n], t < u, \{i, j\} \in E_{\Gamma}} \{v_i^{(t)} \to r_{i, j}^{(t, u)} \leftarrow v_j^{(u)}, v_i^{(u)} \to r_{j, i}^{(t, u)} \leftarrow v_j^{(t)}\}.
\end{align}

\begin{figure}[htbp]
\centering
\includegraphics[keepaspectratio,scale=0.83]{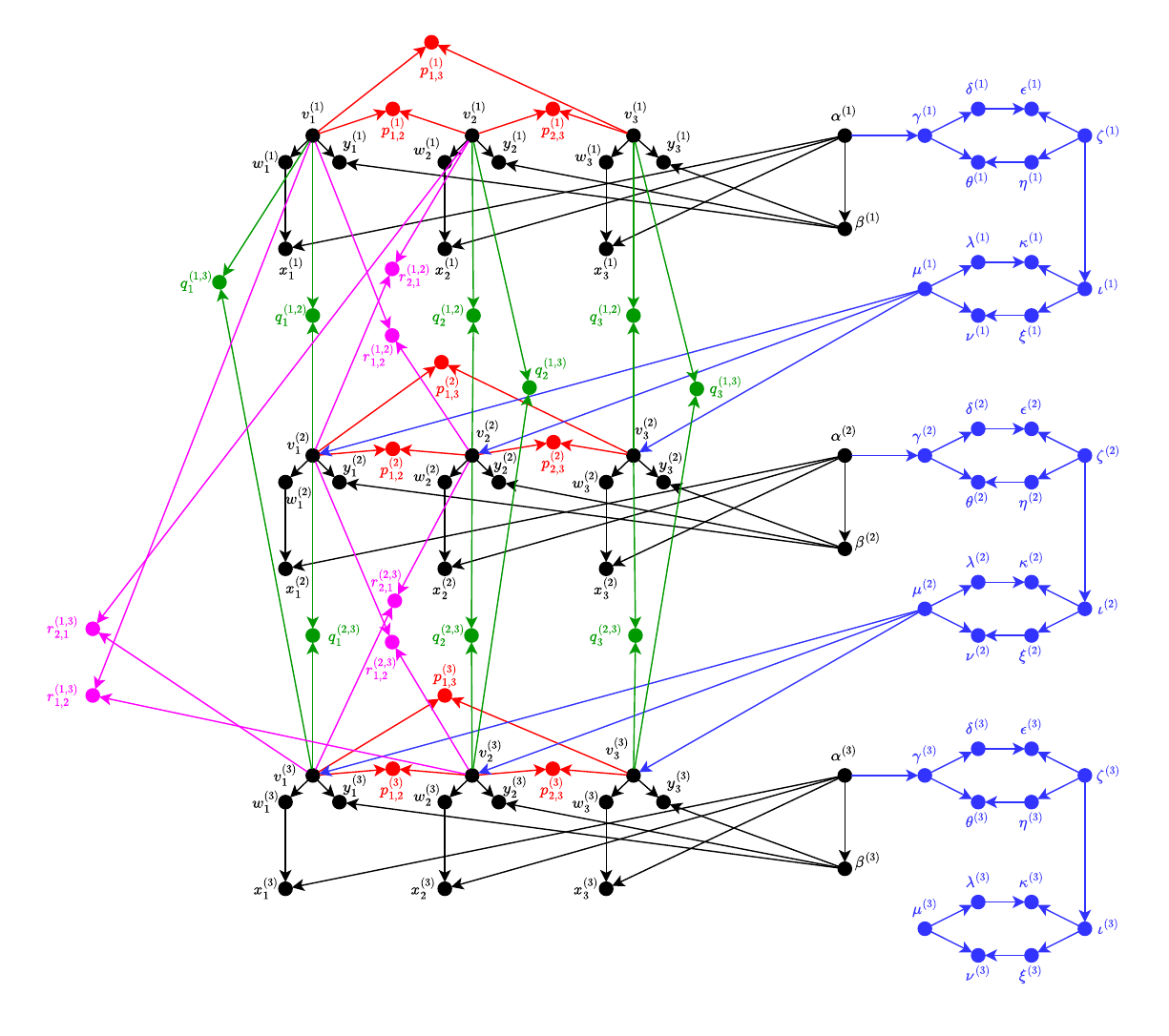}
\caption{The DAG corresponding to a graph $\Gamma = (V_{\Gamma}, E_{\Gamma})$,
where $V_{\Gamma} = \{v_1, v_2, v_3\}, E_{\Gamma} = \{\{v_1, v_2\}\}$}
\label{fig:pspace}
\end{figure}

For example, Fig.~\ref{fig:pspace} shows $G$ corresponding to $V_{\Gamma} = \{v_1, v_2, v_3\}, E_{\Gamma} = \{\{v_1, v_2\}\}$.

We claim that $o(V) = \NN$ if and only if the first player has a winning strategy in \textsc{Node-Kayles} on $\Gamma$.
Intuitively, the DAG $G$ simulates a play on \textsc{Node-Kayles} on $\Gamma$.
Removing a node $v_i^{(t)}$ corresponds to selecting the node $v_i \in V_{\Gamma}$ at turn $t$ in \textsc{Node-Kayles}.

Consider a play on $G$ by two players, say Alice and Bob, from initial position $V$, where Alice plays first.
At first, the available nodes are $\{v_1^{(1)}, v_2^{(1)}, \ldots, v_n^{(1)}, \alpha^{(1)}, \alpha^{(2)}, \ldots, \alpha^{(n)}, \zeta^{(1)}, \zeta^{(2)}, \allowbreak \ldots, \zeta^{(n)}, \mu^{(1)}, \mu^{(2)}, \ldots, \mu^{(n)}\}$.
Note that the subgraph induced by $\{\gamma^{(t)}, \delta^{(t)}, \epsilon^{(t)}, \zeta^{(t)}, \eta^{(t)}, \theta^{(t)}\}$
(resp.~$\{\iota^{(t)}, \kappa^{(t)}, \lambda^{(t)}, \mu^{(t)}, \nu^{(t)}, \xi^{(t)}\}$)
forms a $\mathrm{SIM}(1, 1)$, so that
$\gamma^{(t)}$ and $\zeta^{(t)}$ (resp.~$\iota^{(t)}$ and $\mu^{(t)}$) must be removed at the same time.
Hence, Alice cannot remove any of the available nodes $\zeta^{(t)}$ (resp.~$\mu^{(t)}$) at this time because they must be removed 
at the same turn as the removal of $\gamma^{(t)}$ (resp.~$\iota^{(t)}$), which cannot be removed legally at this time.
Further, for every $t \in [n]$, the subgraph induced by 
\begin{align}
\{\alpha^{(t)}, \beta^{(t)}\}
\cup \bigcup_{i \in [n]} \{v_i^{(t)}, w_i^{(t)}, x_i^{(t)}, y_i^{(t)}\}
\cup \bigcup_{i, j \in [n], i < j} \{p_{i, j}^{(t)}\}
\end{align}
forms a $\mathrm{SIM}(n, 1)$ and requires that removing $\alpha^{(t)}$ and removing exactly one of $\{v_1^{(t)}, v_2^{(t)}, \ldots, \allowbreak v_n^{(t)}\}$ must be done simultaneously.
However, $\{v_1^{(t)}, v_2^{(t)}, \ldots, v_n^{(t)}\}$ for $t \geq 2$ are not available,
thus the nodes $\alpha^{(2)}, \alpha^{(3)}, \ldots, \alpha^{(n)}$ cannot be removed.
Consequently, the first move must be in the form $S = \{\alpha^{(1)}, v_i^{(1)}\}$ for some $i \in [n]$,
which corresponds to selecting $v_i \in V_{\Gamma}$ at the first turn in \textsc{Node-Kayles}.

After Alice's first move, the node $\beta^{(1)}$ becomes available for Bob by Alice's removal of $\alpha^{(1)}$.
Removing $\{\beta^{(1)}, \zeta^{(1)}\}$ is Bob's only move that does not immediately result in a loss for Bob,
since the other available nodes are safely ignored as discussed at the beginning of this section.
Similarly, Alice's next move is limited to removing $\{\iota^{(1)}, \mu^{(1)}\}$,
which makes the nodes $v_1^{(2)}, v_2^{(2)}, \ldots, v_n^{(2)}$ available for the second turn of \textsc{Node-Kayles} played by Bob.
In Fig.~\ref{fig:pspace}, the corresponding parts are drawn in blue.
The game continues similarly for subsequent turns $t = 2, 3, \ldots$.
Since any play on \textsc{Node-Kayles} terminates within $n$ moves, the constructed DAG $G$ contains a sufficient number of nodes to simulate the entire play.

During the simulation, a move on $G$ corresponding to an illegal move on $\Gamma$ leaves an isolated node, forcing a loss for the player making such a move, as follows.
The nodes $q_i^{(t, u)}$ ensure that if $v_i$ was chosen in a previous turn $t$, choosing the same node $v_i$ at a later turn $u > t$ leaves $q_i^{(t, u)}$ isolated (in Fig.~\ref{fig:pspace}, the corresponding parts are drawn in green).
Also, the nodes $r_{i, j}^{(t, u)}$ and $r_{j, i}^{(t, u)}$ ensure that if $\{v_i, v_j\} \in E_{\Gamma}$ and $v_i$ was chosen at turn $t$, then $v_j$ cannot be selected at any subsequent turn $u > t$, reflecting the constraint of \textsc{Node-Kayles} that two adjacent nodes of $\Gamma$ cannot both be chosen (in Fig.~\ref{fig:pspace}, the corresponding parts are drawn in purple).

Note that remnants of the $\mathrm{SIM}$ gadgets generated during the simulation can be safely ignored by Lemma~\ref{thm:ignore}, as noted when introducing the gadget.

By the above construction, any play on $G$ that deviates from legal moves on $\Gamma$ leads directly to an $\NN$-position for the opponent.
Therefore, the first player has a winning strategy in \textsc{Node-Kayles} on $\Gamma$ if and only if $o(V) = \NN$.

The number of nodes and edges of $G$ is polynomial in $|V_{\Gamma}| + |E_{\Gamma}|$, 
so that the reduction is performed in polynomial time.
Finally, it is directly seen that $l(V) \le 3$ and that the underlying undirected graph of $G$ is bipartite.
This completes the proof.
\end{proof}

\subsection{The \textsf{NP}-completeness for $l(V) \leq 2$}
\label{subsec:np}

We show that determining the outcome of the Ungar game on a given DAG is \textsf{NP}-complete when $l(V) \le 2$.
The hardness is shown via a polynomial-time reduction from 3-SAT.
Recall that an instance of 3-SAT consists of a Boolean formula $F$ in 3-conjunctive normal form (3-CNF) over variables $X_1, X_2, \ldots, X_m$, and the problem asks whether there exists a truth assignment that satisfies $F$,
which is known to be \textsf{NP}-complete \cite{GJ79}.

Our second main result is stated as follows, again reformulating the statement in Section~\ref{sec:intro} to align with the formulation by a DAG.
\begin{theorem}
\label{thm:np}
The problem of determining the outcome of the Ungar game on a given DAG $V$ with $l(V) \leq 2$ is \textsf{NP}-complete.
\end{theorem}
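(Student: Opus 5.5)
We would treat membership and hardness separately. For hardness we reduce from 3-SAT, in the style of the proof of Theorem~\ref{thm:pspace} but with only one ``turn'' of simulation. For membership, the intended certificate is the winning first move; the difficulty is that verifying it naively requires a coNP-style check, so a structural argument is needed.

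\emph{Membership in \textsf{NP}.} The plan is to prove a structural lemma for $l(V)\le 2$. We would aim to show that, whenever $o(V)=\NN$, there is a winning first move $S$ such that the resulting position $V\setminus S$, after pruning with Lemma~\ref{thm:ignore} (removing $\Reach(\hat x)$ for every leaf $x$ with $\hat x\twoheadrightarrow x$), reduces to a DAG of length at most $1$. The outcome of such a DAG can be computed in polynomial time by the result of \cite{Pal26} on graded posets. The certificate would then be $S$, and the verifier would perform the pruning and run the polynomial-time evaluation.

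To prove the lemma, we would use Lemma~\ref{thm:dominated} and Corollary~\ref{cor:isolated} to argue as follows. In a $\PP$-position of length $2$ with no available dominated node, every level-$1$ node whose parents are all available is ``protected'' only by leaves. Hence a winning move may always be modified to also clear the level-$0$ nodes above surviving level-$2$ chains without changing the outcome. I expect this step to be the main obstacle. If the lemma fails as stated, the fallback is to bound the number of ``meaningful'' moves in length-$2$ positions: after at most two non-losing moves the position should have length at most $1$. The certificate would then be a first move together with a polynomial-time checkable description of the replies.

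\emph{\textsf{NP}-hardness.} Given a 3-CNF $F$ over $X_1,\dots,X_m$, we create literal nodes $T_i,F_i$ at level $0$ and a node $p_i$ with $T_i\to p_i\leftarrow F_i$, so that removing both literals of a variable isolates $p_i$. The intended first move removes, for each variable, the literal node that is set to \emph{false}. For each clause $C_j$ we add a level-$1$ node $c_j$ with arcs from the three literal nodes occurring in $C_j$. Then $c_j$ becomes isolated, and hence by Corollary~\ref{cor:isolated} the move loses, exactly when all literals of $C_j$ are false.

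To force the first player to fix every variable in a single move, we chain the variables with $\mathrm{SIM}$-like gadgets. Specifically, a selector node $\sigma_i$ is attached so that removing a literal of $X_i$ without the other required removals leaves an available dominated node, in the manner of $\alpha_i$ in $\mathrm{SIM}(m,n)$, and the $\sigma_i$ are tied together. Alternatively, omitting a variable leaves a move for the opponent that mirrors the first player's move and wins. Finally, we append a small fixed gadget so that, after a correct assignment move, all remnants become leaves or ignorable parts by Lemma~\ref{thm:ignore}, and the remaining position is a $\PP$-position. We would check that all of this fits within $l(V)\le 2$.

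We then verify the two directions. A satisfying assignment yields a move to a $\PP$-position. Conversely, every other first move either isolates some node, creates an available dominated node, or corresponds to an assignment falsifying some clause, and in each case the opponent wins. The construction is clearly of polynomial size.
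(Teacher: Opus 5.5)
Your hardness plan follows the paper's route, but the membership half has a genuine gap: the structural lemma you propose is false. Let $W$ have arcs $a\to b_1$, $a\to b_2$, $b_1\to c$, $b_2\to c$.

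\begin{itemize}
\item In $\{b_1,b_2,c\}$ the node $b_1$ is available and dominated, so that position is $\NN$. Hence $W$, whose only move removes $a$, is $\PP$.
\item $W$ has no leaf, since $c$ has in-degree $2$. So pruning does nothing and $l(W)=2$.
\item In $W\cup\{z\}$ with $z$ isolated, the unique winning move is $\{z\}$, because both $\{a\}$ and $\{a,z\}$ expose $b_1$.
\end{itemize}

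So every winning move leads to a position that does not prune to length $\le 1$. Moreover, enlarging the winning move $\{z\}$ by the level-$0$ node $a$ above the surviving chain turns it into a losing move, contrary to your heuristic. Excluding leaves and available dominated nodes beforehand does not rescue the lemma. Take the disjoint union of $W$ with a 3-SAT instance of the paper's type for a satisfiable formula having at least two distinct literals per clause. Its winning moves are exactly the satisfying assignments, and each leads to a position that prunes to $W$. Your fallback only restates the problem: certifying that \emph{all} opponent replies lose is exactly the coNP-type obstacle you identified.

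The missing idea is the paper's \emph{safe} set: a set $S\subseteq L_0(V)$ such that $V\setminus S$ has no available node dominated in $V\setminus S$. Suppose $V$ has no available dominated node (otherwise $o(V)=\NN$ by Lemma~\ref{thm:dominated}). Then a safe set $S_{\max}$ of maximum cardinality exists, and $V\setminus S_{\max}$ is $\PP$, for these reasons:
\begin{itemize}
\item Every reply lies in $L_0(V)\cup L_1(V)$.
\item A reply containing $x\in L_1(V)$ isolates some $y$ with $x\twoheadrightarrow y$ in $V\setminus S_{\max}$. Such a $y$ exists because $x$ is not dominated, and it is a sink because $l(V)\le 2$.
\item A reply $S\subseteq L_0(V)$ leaves an available dominated node, because $S_{\max}\cup S$ is not safe by maximality.
\end{itemize}
Hence, for such $V$, $o(V)=\NN$ iff a non-empty safe set exists. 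That set is a polynomially checkable certificate, and no appeal to \cite{Pal26} is needed.

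For hardness, your construction matches the paper's up to convention: you delete false literals and draw clause arcs from the literals, while the paper deletes true literals and draws them from the negations. However, the forcing step is only gestured at. The paper places a $\mathrm{SIM}(2,2)$ gadget on $\{X_i,\bar X_i\}$ and $\{X_j,\bar X_j\}$ for every pair $i\neq j$, with $\chi_i$ (your $p_i$) acting as the shared node $p_{1,2}$. All terminals are at level $0$, so $l(V)\le 2$. Each literal has private children $\alpha_{i,j}$ or $\bar\alpha_{i,j}$, so the initial position has no available dominated node, which your construction must also guarantee. No extra final gadget is needed. After a satisfying move, each surviving literal sits above the leaf $\chi_i$, so Lemma~\ref{thm:ignore} erases everything reachable from it, including every clause node. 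The $\mathrm{SIM}$ remnants are erased likewise, leaving the empty $\PP$-position. Your ``mirroring'' alternative is unjustified and unnecessary.
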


\begin{proof}
(Proof of belonging to \textsf{NP})
As long as there exists a leaf in $V$, we may repeat the reduction by Lemma \ref{thm:ignore} without changing the outcome.
If the resulting $V$ has an available dominated node,
then we immediately conclude that the outcome is $\NN$ by Lemma \ref{thm:dominated}.
Since these processes can be performed in polynomial time,
we may assume that the given DAG $V$ has no leaves or available dominated nodes.

Let $S \subseteq L_0(V)$ be said to be \emph{safe} if $V \setminus S$ has no available nodes dominated in $V \setminus S$.
Let $S_{\max} \subseteq L_0(V)$ be a safe set with the maximum cardinality among all safe $S \subseteq L_0(V)$.
Note that such a set does indeed exist because the empty set is safe by the assumption that $V$ has no available dominated nodes.
We claim that $o(V \setminus S_{\max}) = \PP$ below.

To prove that, we choose a non-empty $S \subseteq L_0(V \setminus S_{\max})$ arbitrarily and confirm $o((V \setminus S_{\max}) \setminus S) = \NN$.
Since $S \subseteq L_0(V \setminus S_{\max}) \subseteq L_0(V) \cup L_1(V)$, there are two possible cases as follows.
\begin{itemize}
\item The case $S \cap L_1(V) \neq \emptyset$:
Let $x \in S \cap L_1(V)$.
Since $S_{\max}$ is safe, the subgraph $V \setminus S_{\max}$ has no available nodes dominated in $V \setminus S_{\max}$.
Hence, there exists $y \in V \setminus S_{\max}$ such that $y \in \reach(x)$ and $y \not\in \reach((V \setminus S_{\max}) \setminus \{x\})$, which is equivalent to $y \twoheadleftarrow x$ in $V \setminus S_{\max}$. 
Since $y \leftarrow x \in L_1(V)$ and $l(V) \leq 2$, the node $y$ is a leaf of $V \setminus S_{\max}$.
Therefore, removing $S$ makes $y$ isolated and results in $o((V \setminus S_{\max}) \setminus S) = \NN$ by Corollary \ref{cor:isolated}.
\item The case $S \subseteq L_0(V)$:
Removing $S$ leaves an available dominated node because $S_{\max} \cup S \subseteq L_0(V)$ has a greater cardinality than $S_{\max}$ and thus is not safe by the definition of $S_{\max}$ (i.e., the maximality in cardinality).
Therefore, $o((V \setminus S_{\max}) \setminus S) = \NN$ by Lemma \ref{thm:dominated}.
\end{itemize}
Consequently, we conclude that $o(V \setminus S_{\max}) = \PP$.
This shows that $o(V) = \NN$ if and only if $S_{\max} \neq \emptyset$.
Further, $S_{\max} \neq \emptyset$ is equivalent to the existence of a non-empty safe $S \subseteq L_0(V)$ (not necessarily with the maximum cardinality).
Hence, the problem of determining the existence of a non-empty safe $S$ belongs to \textsf{NP} because
it can be confirmed in polynomial time whether a given $S$ is safe or not.
Therefore, the problem of determining $o(V)$ is also in \textsf{NP}.

(Proof of the \textsf{NP}-hardness)
We show a polynomial-time reduction from 3-SAT.
Suppose that we are given a Boolean formula $F(X_1, X_2, \ldots, X_m) = \bigwedge_{i = 1}^n (A_i \lor B_i \lor C_i)$ in 3-CNF, where $A_i, B_i, C_i$ are literals (i.e., either $X_j$ or $\bar{X}_j$ for some $j \in [m]$) for $i \in [n]$.
We may suppose $m \geq 2$ for the reduction.

We construct a DAG $G = (V, A)$ corresponding to $F$ as follows:
\begin{align}
V &\coloneqq \bigcup_{i \in [m]}\{X_i, \bar{X}_i, \chi_i\}
\cup \bigcup_{i, j \in [m], i \neq j}\{\alpha_{i, j}, \beta_{i, j}, \gamma_{i, j},
\bar{\alpha}_{i, j}, \bar{\beta}_{i, j}, \bar{\gamma}_{i, j}\}
\cup \bigcup_{i \in [n]}\{y_i\},\\
A &\coloneqq \bigcup_{i \in [m]}\{X_i \to \chi_i \leftarrow \bar{X}_i\}\\
&\cup \bigcup_{i, j \in [m], i \neq j}\{X_i \to \alpha_{i, j}, \alpha_{i, j} \to \beta_{i, j}, \alpha_{i, j} \to \gamma_{i, j}, X_i \to \beta_{j, i}, X_i \to \bar{\beta}_{j, i}\}\\
&\cup \bigcup_{i, j \in [m], i \neq j}\{\bar{X}_i \to \bar{\alpha}_{i, j}, \bar{\alpha}_{i, j} \to \bar{\beta}_{i, j}, \bar{\alpha}_{i, j} \to \bar{\gamma}_{i, j}, \bar{X}_i \to \gamma_{j, i}, \bar{X}_i \to \bar{\gamma}_{j, i}\}\\
&\cup \bigcup_{i \in [n]}\{\bar{A}_i \to y_i, \bar{B}_i \to y_i, \bar{C}_i \to y_i\} \label{eq:du3hosocjrfu}.
\end{align}

Note that each literal $X_i$ (or $\bar{X}_i$) also serves as a node in $V$.
The arcs $\bar{A}_i \to y_i, \bar{B}_i \to y_i, \bar{C}_i \to y_i$ in the last term mean that $y_i$ receives directed arcs from the negations of literals in the clause $(A_i \lor B_i \lor C_i)$. 

\begin{figure}[htbp]
\centering
\includegraphics[keepaspectratio,scale=0.8]{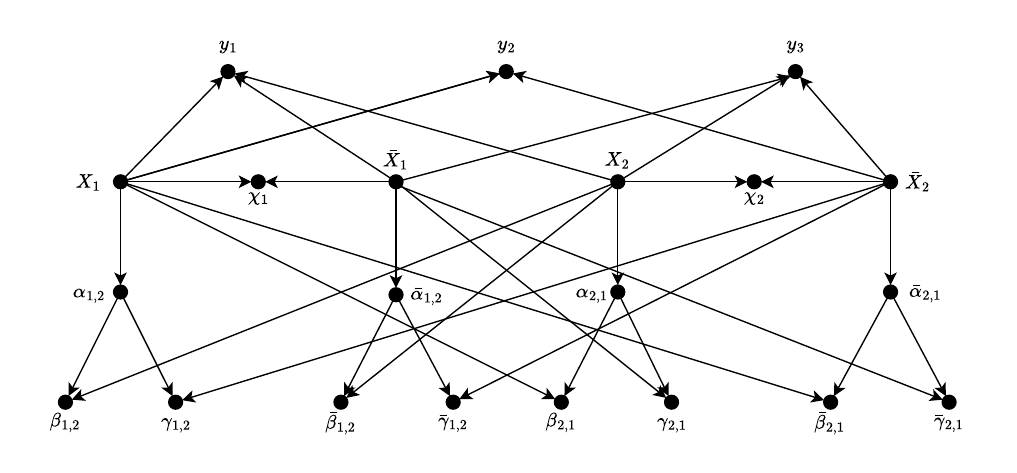}
\caption{The DAG corresponding to $F = (X_1 \lor \bar{X}_1 \lor \bar{X}_2) \land (\bar{X}_1 \lor \bar{X}_1 \lor X_2) \land (X_1 \lor X_2 \lor \bar{X}_2)$}
\label{fig:np-2}
\end{figure}
\begin{figure}[htbp]
\centering
\includegraphics[keepaspectratio,scale=0.59]{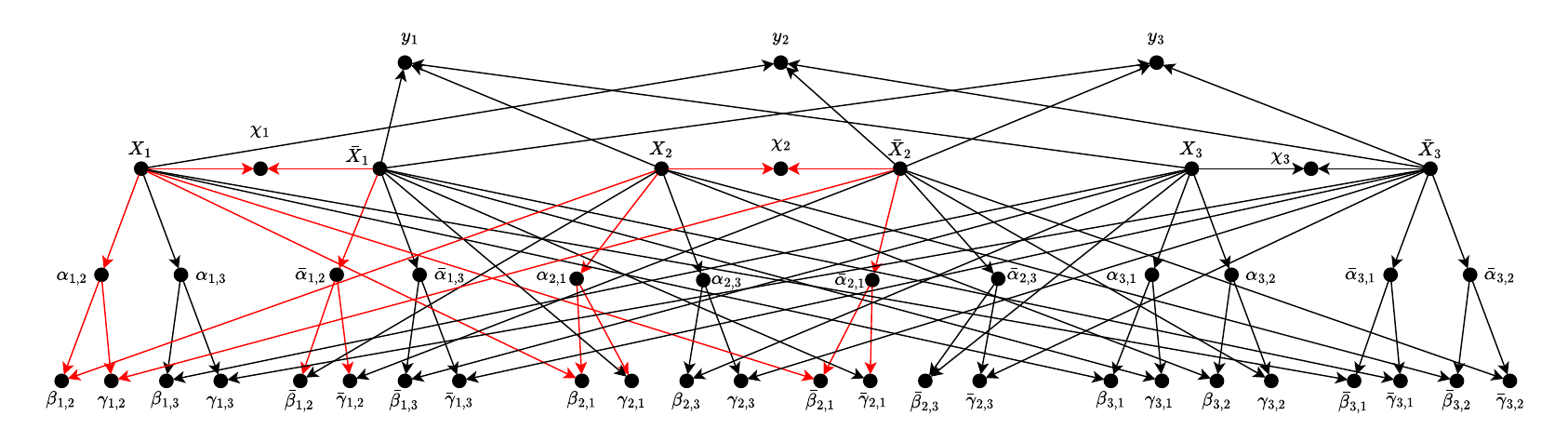}
\caption{The DAG corresponding to $F = (X_1 \lor \bar{X}_2 \lor \bar{X}_3) \land (\bar{X}_1 \lor X_2 \lor X_3) \land (X_1 \lor X_2 \lor X_3)$}
\label{fig:np-3}
\end{figure}

For example, Figures~\ref{fig:np-2} and \ref{fig:np-3} show $G$ corresponding to $F = (X_1 \lor \bar{X}_1 \lor \bar{X}_2) \land (\bar{X}_1 \lor \bar{X}_1 \lor X_2) \land (X_1 \lor X_2 \lor \bar{X}_2)$ and $F = (X_1 \lor \bar{X}_2 \lor \bar{X}_3) \land (\bar{X}_1 \lor X_2 \lor X_3) \land (X_1 \lor X_2 \lor X_3)$, respectively.

We claim that $o(V) = \NN$ if and only if the formula $F$ is satisfiable.
Intuitively, the first player's choice of a subset $S \subseteq L_0(V) = \bigcup_{i \in [m]}\{X_i, \bar{X}_i\}$ corresponds to choosing a truth assignment for the variables $X_1, X_2, \ldots, X_m$.
Including $X_i$ in $S$ sets $X_i$ to \textsf{true}, while including $\bar{X}_i$ sets $X_i$ to \textsf{false}. 

We first show that the first player can only choose a set $S$ that contains exactly one literal node for each variable.
For each $i, j \in [m]$ with $i \neq j$, the subgraph induced by
\begin{align}
V_{i, j} &\coloneqq
\{X_i, \alpha_{i, j}, \beta_{i, j}, \gamma_{i, j}, \bar{X}_i, \bar{\alpha}_{i, j}, \bar{\beta}_{i, j}, \bar{\gamma}_{i, j}, \chi_i\} \cup
\{X_j, \alpha_{j, i}, \beta_{j, i}, \gamma_{j, i}, \bar{X}_j, \bar{\alpha}_{j, i}, \bar{\beta}_{j, i}, \bar{\gamma}_{j, i}, \chi_j\}
\end{align}
forms a $\mathrm{SIM}(2, 2)$. 
For example, in Fig.~\ref{fig:np-3}, the part colored in red forms a $\mathrm{SIM}(2, 2)$ for $(i, j) = (1, 2)$.
Since $S$ is non-empty, we have $X_k \in S$ for some $k \in [m]$.
For every $j \in [m] \setminus \{k\}$, the subgraph $V_{k, j}$ consisting of a $\mathrm{SIM}(2, 2)$ forces $S$ to contain exactly one of $\{X_k, \bar{X}_k\}$ and exactly one of $\{X_j, \bar{X}_j\}$.
This forces $S$ to choose exactly one literal for every variable $X_i$.

Accordingly, in what follows, we assume that $|S \cap \{X_i, \bar{X}_i\}| = 1$ for every $i \in [m]$.
We define \emph{the assignment corresponding to $S \subseteq L_0(V)$} as the truth assignment for $X_1, X_2, \ldots, X_m$ such that $X_i$ is assigned \textsf{true} if and only if $X_i \in S$.
We claim that $o(V \setminus S) = \PP$ if and only if the assignment corresponding to $S$ satisfies the formula $F$.

Note that the node $y_i$ receives arcs from $\bar{A}_i, \bar{B}_i, \bar{C}_i$ (i.e., the nodes corresponding to the negations of the clause's literals).
Therefore, if the assignment corresponding to $S$ does not satisfy $F$,
then all three literals $A_i, B_i, C_i$ were evaluated as \textsf{false} for some clause $(A_i \lor B_i \lor C_i)$,
so that $\bar{A}_i, \bar{B}_i, \bar{C}_i$ would all be removed by $S$, leaving $y_i$ isolated and resulting in $o(V \setminus S) = \NN$.

Conversely, when $S$ corresponds to a satisfying assignment, applying Lemma~\ref{thm:ignore} repeatedly allows us to reduce $V \setminus S$ to the empty set, establishing $o(V \setminus S) = \PP$.
Specifically, the remnants of the $\mathrm{SIM}$ gadgets can be safely eliminated as noted when introducing the gadget.
In addition, for every $i \in [n]$, because the assignment corresponding to $S$ satisfies $F$, there exists an arc from the negation of a satisfied literal in the $i$-th clause to $y_i$ in $V \setminus S$.
Suppose $y_i \leftarrow \bar{X}_1$ in $V \setminus S$, for example.
Then $X_1$ must be contained in $S$ and removed, so that $\bar{X}_1$ has an arc to a leaf $\chi_1$ in $V \setminus S$,
which leads to that $y_i \in \Reach_{V \setminus S}(\bar{X}_1)$ is removed via Lemma~\ref{thm:ignore}.

Consequently, the first player has a winning move $S$ if and only if there exists a set $S$ containing exactly one literal for each variable such that the corresponding assignment satisfies $F$, which is equivalent to $F$ being satisfiable.

The number of nodes and edges of $G$ is polynomial in $m + n$,  so that the reduction is performed in polynomial time.
Also, it is directly seen that $l(V) \le 2$.
This completes the proof.
\end{proof}

\section*{Acknowledgements}
This work was supported in part by JSPS KAKENHI Grant Number JP26K21166.

\bibliographystyle{plain}
\bibliography{draft}

\end{document}